\theoremstyle{definition}
\newtheorem{remark}{Remark}
\newtheorem{theorem}{Theorem}
\newtheorem{proposition}{Proposition}
\DeclareMathOperator*{\argmin}{arg\,min}
\newcommand{\ubar}[1]{\underaccent{\bar}{#1}}
\begin{document}


  \title{Surrogate Models in Bidirectional Optimization of Coupled Microgrids}
  \runningtitle{Surrogate Models in Bidirectional Optimization}

\subtitle{Ersatzmodelle in bidirektionaler Optimierung gekoppelter Microgrids}

  \author[1]{Manuel Baumann}
  \author[1]{Sara Grundel}
  \author*[2]{Philipp Sauerteig}
  \author[3]{Karl Worthmann}
  \runningauthor{Baumann et al.}
  \affil[1]{Max Planck Institute for Dynamics of Complex Technical Systems, \texttt{\{baumann,grundel\}@mpi-magdeburg.mpg.de}}
  \affil[2]{Technische Universit\"{a}t Ilmenau, \texttt{philipp.sauerteig@tu-ilmenau.de}}
  \affil[3]{Technische Universit\"{a}t Ilmenau, \texttt{karl.worthmann@tu-ilmenau.de}}

\abstract{%
The energy transition entails a rapid uptake of renewable energy sources. Besides physical changes within the grid infrastructure, energy storage devices and their smart operation are key measures to master the resulting challenges like, e.g., a highly fluctuating power generation. For the latter, optimization based control has demonstrated its potential on a microgrid level. However, if a network of coupled microgrids is considered, iterative optimization schemes including several communication rounds are typically used. Here, we propose to replace the optimization on the microgrid level by using surrogate models either derived from radial basis functions or neural networks to avoid this iterative procedure. We prove well-posedness of our approach and demonstrate its efficiency by numerical simulations based on real data provided by an Australian grid operator.
}
  \keywords{Smart Grids, Model Predictive Control, Distributed Optimization, Surrogate Models, Bidirectional Optimization, Neural Networks, Radial Basis Functions}
  
  \received{05-07-2019}
  \revised{04-09-2019}
  \accepted{06-09-2019}

\maketitle

\hyphenation{Ener-gie-er-zeu-gung Ap-pro-xi-ma-ti-ons-an-satz}

\noindent{\bf Zusammenfassung:} 
Die Energiewende bringt einen raschen Zuwachs eneuerbarer Energiequellen mit sich. Neben den physikalischen Ver\"anderungen der Netzinfrastruktur spielen Energiespeichereinheiten und deren intelligente Nutzung eine entscheidende Rolle, um die sich ergebenden Probleme wie z.B. die stark schwankende Energieerzeugung zu bew\"altigen. In Bezug auf Letztere haben optimierungsbasierte Steuerungstechniken ihr Potential auf Microgrid-Ebene unter Beweis gestellt. Betrachtet man jedoch ein Netzwerk gekoppelter Microgrids, werden \"ublicherweise iterative Optimierungsans\"atze gew\"ahlt, welche mit mehreren Kommunikationsrunden einhergehen. Um derartigen Kommunikationsschleifen vorzubeugen, schlagen wir vor, den Optimierungsschritt auf Microgrid-Ebene durch den Einsatz geeigneter Ersatzmodelle zu vermeiden. Den hier verwendeten Ersatzmodellen liegen zum einen radiale Basisfunktionen und zum anderen neuronale Netze zugrunde. Wir zeigen, dass unser Ansatz wohlgestellt ist und demonstrieren die Effizienz anhand numerischer Simulationen basierend auf realen Daten eines australischen Verteilnetzbetreibers.
\\
\vspace{-0.15cm}

\noindent \textbf{Schlagw\"orter:} 
Smart Grids, Modellprädiktive Regelung, Verteilte Optimierung, Ersatzmodelle, Bidirektionale Optimierung, Neuronale Netze, Radiale Basisfunktionen

\section{Introduction}

The share of renewable energy sources rapidly increases; also due to more and more installed devices like e.g., solar panels at household-level. %
Hence, households become \textit{prosumers}, i.e., power is not only consumed but also produced. 
Therefore, energy generation and distribution takes place in a distributed way. 
In particular, energy can be transmitted bidirectionally between the grid and the prosumers, which results in a paradigm shift in the grid organization. %
In addition, 
prosumers may also possess some kind of energy storage device %
in order to manipulate their power demand profiles by either charging or discharging. %
From the grid operator's perspective it might be beneficial that charging decisions are not made based on local information only. %
Instead taking into account information on the entire grid may improve the system-wide operation, e.g.,\ to flatten %
the overall power demand within the grid in order to facilitate the power supply.~\cite{ParhLotf15}. 
In order to achieve this goal, communication is needed. In the future, each household shall be equipped with a smart meter which yields so-called \emph{smart homes}. Smart meters collect data and communicate with the grid operator automatically. 

A straight-forward way to optimally operate the overall system 
is to formulate one large-scale optimization problem and to solve it in a centralized way, see, e.g.~\cite{OlivCani11}. %
This approach, however, is hard to realize in practice. One of the disadvantages is that some central node needs the complete information about the grid, which is, e.g.\ due to data privacy, not desirable. %
Alternatives are decentralized or distributed optimization algorithms. 
In~\cite{OkubYosh19} the authors propose a decentralized approach to steer energy storage systems in order to avoid over-capacity of pole transformers while maintaining a high charging amount of energy storage systems in low-voltage distribution systems. %
The other option mentioned above are distributed optimization methods such as distributed dual ascent~\cite{BertTsit89}, Alternating Direction Method of Multipliers (ADMM)~\cite{BoydPari11} or %
Augmented Lagrangian based Alternating Direction Inexact Newton (ALADIN)~\cite{HousFras16}. %
These algorithms use a star-shaped communication topology, i.e.\ each smart home communicates only with the grid operator and does not share any information with its neighbours. Nevertheless, in every iteration %
each household has to transmit specific (personal) data to the grid operator, see also~\cite{BrauFaul18} and~\cite{EngeJian19} for an application of ADMM and ALADIN to electrical networked systems, respectively. %
In order to exploit the potential of these algorithms they are typically embedded within a Model Predictive Control (MPC) framework. MPC is a state-of-the-art technique to tackle optimal control problems by solving finite-dimensional optimization problems successively, see e.g.~\cite{GruePann17} for an introduction to MPC and~\cite{KhalSavk10,PariRiko14} for MPC approaches in electrical networks. 

An alternate option to steer the power demand of local agents besides battery control is to schedule so-called controllable loads. Controllable loads can be shifted in time to avoid bottlenecks in the energy supply, see e.g.~\cite{GradSilv15,BrauGrue16}. There is also a large potential in the context of stochastic optimization of smart grids. For weather forecasting methods we refer to~\cite{AppiOrdi18}. %
How to integrate electrical vehicles into the electricity network under uncertainties is described in~\cite{AppiMuno18}.

Considering the power networks described so far, it is assumed that exchange of energy within the grid is possible at any time and does not cause any losses or additional costs, which might (approximately) hold for domestic nets, e.g.\ a %
town. In this paper, we refer to these grids as microgrids (MGs). In~\cite{Lass11,BrauSaue19}, the concept of coupled MGs is used to tackle large-scale problems incorporating several MGs. In the latter, the authors show that even if each single MG is optimally operated, there is still room for improvement if energy can be exchanged among %
MGs. Therefore, a second optimization problem is solved on a higher grid level in order to optimally exchange energy %
resulting in %
a bilevel optimization problem~\cite{SinhMalo17}.

In~\cite{GrunSaue19}, the authors propose to replace the distributed optimization routine on the lower grid level by a surrogate model in order to speed-up the calculation and further reduce communication effort. Here, Radial Basis Functions (RBFs)~\cite{Buh2003} are used to approximate the input-output behaviour of ADMM within the framework of coupled MGs established in~\cite{BrauSaue19}. Besides RBFs there are various methods to learn the behaviour of a complex function. Artificial Neural Networks (NNs) are one of the most popular representatives of modern artificial intelligence techniques and are often used in practice due to their success in various application fields, see e.g.\ the survey article~\cite{AbioJant18}. %
In~\cite{ZhanXu10} the authors forecast loads in a power grid using NNs, whereas in~\cite{SianCeca12} NNs are used in an optimal power flow framework.
The main advantage of using surrogates is that communication effort can be reduced. 

In this paper, we extend the idea of coupled microgrids established in~\cite{BrauSaue19} by proposing an iterative \emph{bi-directional} optimization routine in order to improve the overall performance. 
Due to its iterative structure, however, our method comes along with a strong need for communication between smart homes and grid operator. As a remedy we present two approaches to reduce the communication effort by substituting the optimization on microgrid level via surrogate models. A main difference compared to~\cite{GrunSaue19} lies in the different input-output map that is replaced by the surrogate models, for which we can %
show that %
each input uniquely determines an (optimal) output. %
Furthermore, we also take NNs as potential surrogate models into account 
and study the performance of the resulting approximations numerically in an MPC framework.  %
 Our simulations show that the proposed method approximately recovers the performance based on using ADMM but significantly reduces the communication burden. The effect of applying surrogate models within MPC extends our previous work~\cite{GrunSaue19} where a surrogate model based on RBFs was only applied in a static optimization problem.

The paper is structured as follows: In Section~\ref{sec:Model_coupledMGs} we formulate a mathematical model for coupled microgrids that consists of two hierarchy levels, and introduce optimization problems corresponding to each of them. In the consecutive section, we propose an iterative scheme that requires the solution of a distributed optimization problem on the lower level which is solved using ADMM. In Section~\ref{sec:surrogates}, we investigate the impact of disturbances w.r.t. the lower-level solution on the performance measured in terms of the upper-level objective function. Based on the results, we propose to replace ADMM by surrogates in order to reduce communication effort and computation time. The performance of the optimization scheme incorporating surrogates is analysed in an MPC framework in Section~\ref{sec:results}.

\section{A model for coupled microgrids}\label{sec:Model_coupledMGs}

We consider a system of coupled microgrids (MGs) and call it a \textit{smart grid}. Each MG consists of several residential energy systems (agents) coupled through the grid operator, which can be seen as Central Entity (CE). The coupling of the microgrids is done through a network, where some MGs are connected by a transmission line and others are not connected, cf.~Figure~\ref{fig:coupled MGs}.   
\begin{figure}[ht]
	\centering
	\includegraphics[scale=0.55]{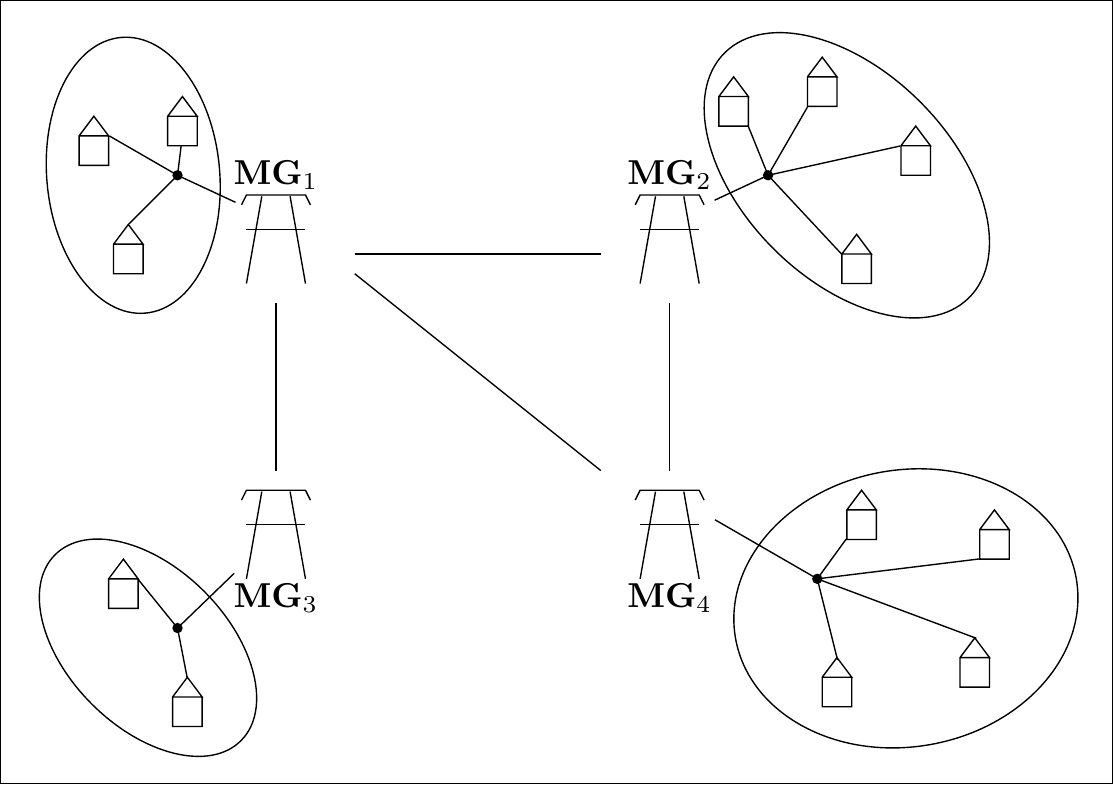}
	\caption{Upper-level model: Schematic representation of four coupled MGs. Energy exchange is possible only via transmission lines.}
	\label{fig:coupled MGs}
\end{figure}

\subsection{Upper-level model: Energy exchange}%
\label{ch:coupled}

We assume that we have $\Xi \in \mathbb{N}$ many MGs which are partially coupled via transmission lines and can be interpreted as nodes of a non-complete graph, see Figure~\ref{fig:coupled MGs} where $\Xi = 4$. %
Each  MG~$\kappa$, for $\kappa \in [1:\Xi]$, consists of $\mathcal{I}_{\kappa}\in \mathbb{N}$ agents modeled in detail in Subsection~\ref{ch:res}.
We assume that each MG $\kappa$ has an average power demand~$\bar{z}_\kappa(n)$  at time~$n$. 
Given this, we can compute the total power demand $\mathcal{I}_{\kappa}\bar{z}_{\kappa}$ of a MG. 
The control goal is to exchange power among the MGs in a way such that a desired quantity $\bar{\zeta}(n)$ is targeted by controlling the residential storage units. We specify~$\bar{\zeta}(n)$ in Subsection~\ref{ch:res}, and assume for the moment that this quantity is known to the grid and has advantages for the grid operation. We assume that this desired quantity is independent of $\kappa$, but this is not necessary for the rest of the discussion. 

Let $\delta_{\nu\kappa}$ describe the percentage of power~$\mathcal{I}_\nu\bar{z}_\nu(n)$ that is transferred from  MG~$\nu$ to  MG~$\kappa$.  We enforce~$\delta_{\nu \kappa}$ equals zero if there is no transmission line between the two  MGs. Otherwise, the power demand of a MG~$\kappa$ is given by its own total power demand $\delta_{\kappa \kappa} \mathcal{I}_{\kappa} \bar{z}_\kappa$, where $\delta_{\kappa \kappa}$ is what remains at the MG, and the sum over the power received from connected MGs, $\sum_{\nu \neq \kappa} \delta_{\nu \kappa} \mathcal{I}_{\nu} \bar{z}_{\nu}$.
For each time step in our prediction horizon of length \mbox{$N \in \mathbb{N}_{\geq 2}$}, we want to match this to the desired power demand starting at time $k$ for $N$ timesteps of each MG in a least-squares sense. The objective function is thus given by \mbox{$\mathcal{J} : \mathbb{R}^{\Xi N} \times \mathbb{R}^{\Xi \times \Xi \times N} \to \mathbb{R}$},
\begin{align}
	 (\bar{z},\delta)  \mapsto  \sum_{n=k}^{k+N-1} 
	 \sum_{\kappa=1}^{\Xi} \left( \bar{\zeta}(n)  \mathcal{I}_\kappa 
	- \sum_{\nu=1}^{\Xi} \delta_{\nu \kappa}(n)  \eta_{\nu \kappa} \mathcal{I}_\nu \bar{z}_\nu(n) 
	\right)^2. \label{OP_upper_cost_function}
\end{align}
Here, the vector $\bar{z} = (\bar{z}(k),\ldots,\bar{z}(k+N-1)^\top)^\top$ with $\bar{z}(\cdot) \in \mathbb{R}^{\Xi}$ stacks the average power demand per MG and time step while the matrix $\eta = (\eta_{\nu \kappa})_{\kappa, \nu =1}^\Xi \in [0,1]^{\Xi \times \Xi}$ incorporates efficiencies along the transmission lines.
	
We are interested in minimizing~\eqref{OP_upper_cost_function} under the following constraints: All exchange rates $\delta_{\nu \kappa}$ are within the interval $[0,1]$, sum up to $1$, and only transfer power in one direction, meaning that either $\delta_{\nu \kappa}$ or $\delta_{\kappa\nu}$ is zero. Moreover, note that at this grid level, the average power demands per MG are known. Following~\cite{BrauSaue19}, the optimization problem of the upper-level is, thus, formulated over the exchange rates~$\delta$,
\begin{subequations}\label{OP_upper}
\begin{align}
	 \min_{\delta \in \Delta} \quad & \mathcal{J}(\bar{z},\delta)  \\
	 \mathrm{s.t.} \quad & \sum_{\kappa=1}^{\Xi} \delta_{\nu \kappa}(n) = 1 \label{Constraint:conservation} \\
	&  \delta_{\nu \kappa}(n) \cdot \delta_{\kappa \nu}(n) \leq 0, \ \kappa \neq \nu\label{Constraint:oneway} \\
	& \forall \, \nu, \kappa \in [1:\Xi], \, n \in [k:k+N-1],  \nonumber
\end{align}
\end{subequations}
where $\delta_{\nu \kappa}(n)$ denotes the power exchange rate from  MG~$\nu$ to  MG~$\kappa$ at time instance~$n$, and \mbox{$\Delta \! := \! \left\{ \delta \! \in \! [0,1]^{\Xi \times \Xi \times N}  \! \mid \! \delta_{\nu \kappa}(n) \! = \! 0, \text{ if no line from }\nu \text{ to }\kappa \right\}$}. Constraints~\eqref{Constraint:conservation} and~\eqref{Constraint:oneway} ensure that the whole energy of each  MG is \emph{scheduled} and that exchanges via transmission lines can only occur in one direction during one time step. 
The definition of the set~$\Delta$ encodes the grid topology and, hence, avoids scheduling energy exchange in the absence of a transmission line, cf.~Figure~\ref{fig:coupled MGs}. 
We denote the feasible set of~\eqref{OP_upper} by
\begin{align}
	\mathbb{D}^\delta \; = \; \Set{\delta \in \Delta \subseteq [0,1]^{\Xi \times \Xi \times N} | \text{\eqref{Constraint:conservation} - \eqref{Constraint:oneway} hold}}. \nonumber
\end{align}
The efficiency of a transmission line does not depend on the direction of the transfer, i.e.\ the matrix $\eta$ is symmetric. Furthermore, we assume no loss without transport, i.e.\ $\eta_{\kappa \kappa} \equiv 1$ for all~$\kappa \in [1:\Xi]$, in the rest of the paper.

\subsection{Lower-level model: Single microgrid}
\label{ch:res}

As we have seen in the previous section, we consider an average power demand at each~MG as well as some desired quantity $\bar{\zeta}(n)$. In order to understand these quantities better, we explain the modeling of the MG in all detail. The basis of our considerations forms the model presented in~\cite{WortKell15} and its extension~\cite{BrauFaul18}. 
 Therefore, let each subsystem be equipped with an energy generation device (e.g. roof top photo-voltaic panels) and some storage device (e.g. a battery). Then, the $i$-th system, $i \in [1:\mathcal{I}_\kappa]$ in MG $\kappa$, can be described by the discrete time system dynamics,
\begin{subequations}\label{SysDyn}
\begin{align}
	x_{\kappa_i}(n+1) \; & = \; \alpha_{\kappa_i} x_{\kappa_i}(n) + T(\beta_{\kappa_i} u_{\kappa_i}^+(n) + u_{\kappa_i}^-(n)) \label{eq:batteryDynamics} \\
	z_{\kappa_i}(n) \; & = \; w_{\kappa_i}(n) + u_{\kappa_i}^+(n) + \gamma_{\kappa_i} u_{\kappa_i}^-(n), \label{eq:system_output}
\end{align}
\end{subequations}
where $x_{\kappa_i}(n)$ and $z_{\kappa_i}(n)$ denote the State of Charge (SoC) of the battery and the power demand at time instance $n \in \mathbb{N}_0$, respectively. The latter incorporates the net consumption, $w_{\kappa_i}(n) = \ell_{\kappa_i}(n)  - g_{\kappa_i}(n) $, as the difference of load and power generation, cf. Figure~\ref{fig:RES-Network}.
\begin{figure}[ht]
\centering
\fbox{
\includegraphics[width=0.35\textwidth]{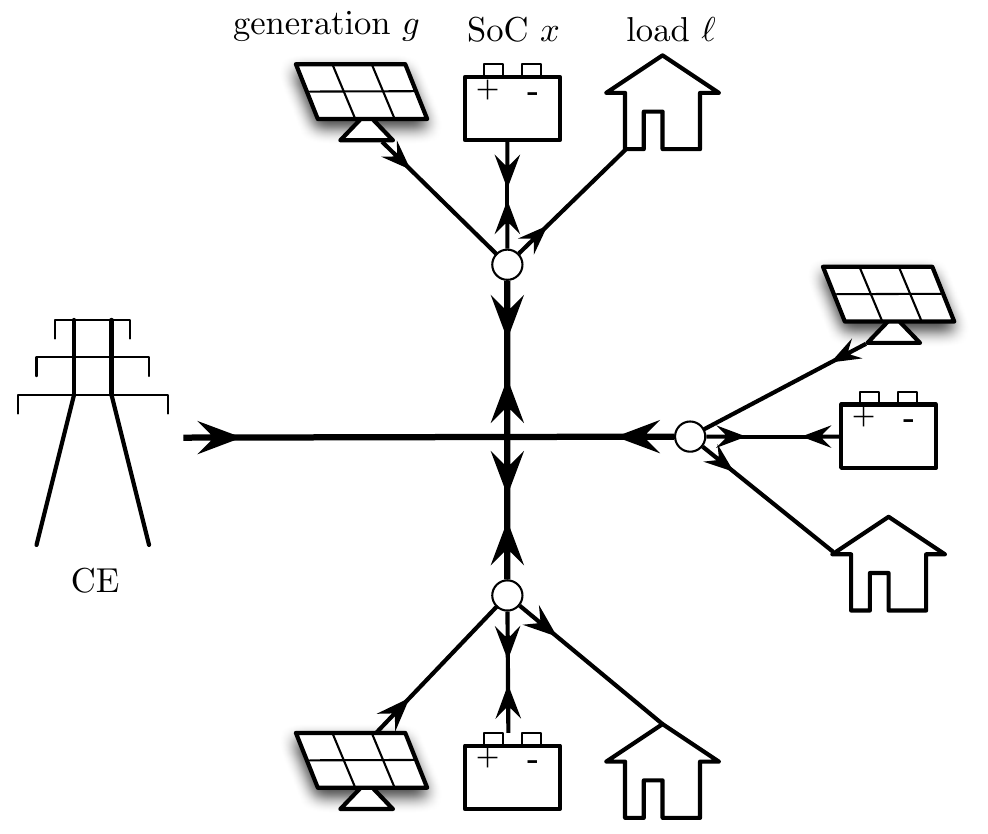}
}
\caption{Lower-level model: Star-shaped network of residential energy systems and central entity (CE). The quantity $w = \ell -g$ in~\eqref{eq:system_output} is obtained from the data set~\cite{RatnWell15}.}
\label{fig:RES-Network}
\end{figure}

The system can be controlled by charging $u_{\kappa_i}^+$ and discharging $u_{\kappa_i}^-$ the battery at each time step. The length of a time step in hours is denoted by \mbox{$T >0$}, e.g. $T = 0.5$ corresponds to a half-hour time window. The constants $\alpha_{\kappa_i},\beta_{\kappa_i},\gamma_{\kappa_i} \in (0,1]$ represent efficiencies w.r.t. self discharge and energy conversion. Furthermore, the initial SoC $x_{\kappa_i}(k) = \hat{x}_{\kappa_i}$, where $k \in \mathbb{N}_0$ denotes the current time instance, is assumed to be known. State and input are subject to the inequality constraints,
\begin{subequations}\label{Constraints}
\begin{eqnarray}
	0 \quad \leq \quad & x_{\kappa_i}(n) & \quad \leq \quad C_{\kappa_i} \\
	\ubar{u}_{\kappa_i} \quad \leq \quad & u_{\kappa_i}^-(n) & \quad \leq \quad 0 \label{Constraint:discharge} \\
	0 \quad \leq \quad & u_{\kappa_i}^+(n) & \quad \leq \quad \bar{u}_{\kappa_i} \label{Constraint:charge}\\
	0 \quad \leq \quad & \frac{u_{\kappa_i}^-(n)}{\ubar{u}_{\kappa_i}} + \frac{u_{\kappa_i}^+(n)}{\bar{u}_{\kappa_i}} & \quad \leq \quad 1. \label{Constraint:chargeNdischarge}
\end{eqnarray}
\end{subequations}
Here, $C_{\kappa_i} \geq 0$ denotes the battery capacity. The last constraint ensures that the bounds on discharging~\eqref{Constraint:discharge} and charging~\eqref{Constraint:charge} hold even if the battery is both discharged and charged during one time step. Note that the case $C_{\kappa_i} = 0$ covers the case, where not all systems have a storage device. Since the future net consumption is not known in advance, it is assumed to be reliably predictable on a short time horizon of size $N$, $N \in \mathbb{N}_{\geq 2}$, time steps. 

For a concise notation we introduce the set $\mathbb{X}_{\kappa_i} = [0,C_{\kappa_i}]$ of feasible states, the set $\mathbb{U}_{\kappa_i} = \Set{(u_{\kappa_i}^+,u_{\kappa_i}^-)^\top \in \mathbb{R}^2 | \eqref{Constraint:discharge} - \eqref{Constraint:chargeNdischarge} \text{ hold}}$ of feasible control pairs and the set 
\begin{align}
	& \mathbb{D}_{\kappa_i} \!=\! \Set{z_{\kappa_i} \!\in\! \mathbb{R}^N| 
	\begin{matrix}
	z_{\kappa_i} = \left(z_{\kappa_i}(k),\ldots,z_{\kappa_i}(k\!+\!N\!-\!1)\right)^\top  \\
		\exists \, u_{\kappa_i} \in \mathbb{U}^N
		\text{such that } \\ 
		x_{\kappa_i}(k) = \hat{x}_{\kappa_i}, \text{\eqref{SysDyn} and \eqref{Constraints} hold}
	\end{matrix}
	} \nonumber
\end{align}
of feasible outputs over the next $N$ time steps, $i \in [1:\mathcal{I}_\kappa]$, $\kappa \in [1:\Xi]$. Referring to the feasible sets of a  MG $\kappa$ we use the Cartesian product, e.g. $\mathbb{D}^{(\kappa)} = \mathbb{D}_{\kappa_1} \times \ldots \times \mathbb{D}_{\kappa_{\mathcal{I}_\kappa}}$ and $z^{(\kappa)} \in \mathbb{D}^{(\kappa)}$. Note that the sets $\mathbb{D}_{\kappa_i}$, $i \in [1:\mathcal{I}_\kappa]$, and hence $\mathbb{D}^{(\kappa)}$ are non-empty, compact, and convex.

The output quantity in~\eqref{eq:system_output} is the power demand~$z_{\kappa_i}$ of an individual agent in MG~$\kappa$. The average power demand $\bar{z}_\kappa =\frac{1}{\mathcal{I}_\kappa} \sum_{i=1}^{\mathcal{I}_\kappa} z_{\kappa_i}$ in each MG can then be computed from the individual power demands, and is used as an input to~\eqref{OP_upper_cost_function}. 
We define $\bar{\zeta}$ in \eqref{OP_upper_cost_function} as a stable reference trajectory by averaging over a past time horizon and over all individual residential units of all microgrids, the so-called overall average net consumption
as proposed in~\cite{BrauFaul18},
\begin{align}
	\bar{\zeta}(n) \, = \,  \frac{1}{\mathcal{I} \cdot \min \{N,n+1\}} \; \sum_{j=n-\min\{n,N-1\}}^n \; \sum_{i=1}^{\mathcal{I}} \; w_{i}(j) \label{eq:ref_val}
\end{align}
where $\mathcal{I} = \sum_{\kappa = 1}^{\Xi} \mathcal{I}_\kappa$ denotes the total number of agents within the entire smart grid. Due to averaging, the trajectory $\bar{\zeta}$ has little fluctuations and yields advantages for the grid operation.

Let us for the moment ignore the coupling described in Subsection~\ref{ch:coupled}. Then, $\delta(n)$ equals the identity for all $n \in [k:k+N-1]$ and $\mathcal{J}$ becomes

\begin{align*}
\bar{z}  \mapsto  \sum_{n=k}^{k+N-1} 
	 \sum_{\kappa=1}^{\Xi} \left( \bar{\zeta}(n)  \mathcal{I}_\kappa 
	-     \mathcal{I}_\kappa \bar{z}_\kappa(n) 
	\right)^2.
\end{align*}
Therefore, the overall objective~$\mathcal{J}$ can be decoupled yielding the local optimization problem 
\begin{align}\label{OP_lower}
	\min_{z^{(\kappa)} \in \mathbb{D}^{(\kappa)}} \quad & g(\bar{z}_\kappa) 
\end{align}
per MG with local objective function $g : \mathbb{R}^N \to \mathbb{R}_{\geq 0}$,
\begin{align}
	g(\bar{z}_\kappa) = \left\| \bar{\zeta}-\bar{z}_\kappa  \right\|_2^2. \nonumber
\end{align}

\subsection{Fully coupled optimization problem}

We are interested in optimizing the function~\eqref{OP_upper_cost_function}. This function, in general, depends on $\delta$ as well as on $\bar{z}$. As seen in the previous section, the average power demand $\bar{z} = \bar{z}(u)$ depends on the control~$u$, which we have to find in such a way that $\mathcal{J}$ is optimal. The overall optimization problem can be written as 
\begin{equation}\label{OP_full}
	 \min_{\delta \in \mathbb{D}^\delta, z^{(\kappa)}\in \mathbb{D}^{(\kappa)}
	 } \quad  \mathcal{J}(\bar{z},\delta).
\end{equation}
Note that due to constraint~\eqref{Constraint:oneway} the optimization of $\mathcal{J}$ w.r.t. $\delta$ is non-convex. Furthermore, the large scale of the optimization w.r.t. $\bar{z}$ causes the use of a centralized solver to be expensive. In addition, using a centralized solver assumes the existence of a global entity gathering the information of the whole grid, in particular the personal data of each household, which is undesirable in practice. Hence, solving~\eqref{OP_full} centralized is impractical. In the subsequent section we present an approach to tackle~\eqref{OP_full} by solving the upper and lower-level problem iteratively. Doing so we avoid a node with full knowledge in the grid and only communicate specific aggregated information in each iteration among the agents.

\section{Bidirectional optimization}

We propose to tackle the optimization problem \eqref{OP_full} in a \textit{bidirectional} way, i.e. we first find an optimal~$\bar{z}$ for~$\delta$ being the identity and then optimize~\eqref{OP_full} w.r.t.~$\delta$ for fixed~$\bar{z}$ in order to find the optimal exchange strategy. This typically already gives a considerable improvement, and has been also done e.g. in~\cite{BrauSaue19}. To solve~\eqref{OP_upper} for a fixed $\bar{z}$ is straight forward, and we use %
a standard Sequential Quadratic Programming (SQP) solver. We refer to~\cite{NoceWrig06} for an introduction to SQP methods. In this paper we show how to incorporate the computed energy exchange from the upper level into the lower-level optimization problem in order to improve the overall performance.

\subsection{Bidirectional optimization scheme}

Assume that each MG $\kappa$, $\kappa \in [1:\Xi]$, within the smart grid has already solved its inherent optimization problem~\eqref{OP_lower} and based on the corresponding solutions~$\bar{z}_\kappa$ an energy exchange policy $\delta^\star$ has been computed according to~\eqref{OP_upper}. 
This exchange yields an updated power demand
\begin{align}
	\bar{z}_\kappa^+(n) = \frac{1}{\mathcal{I}_\kappa} \sum_{\nu=1}^{\Xi} \delta^\star_{\nu \kappa}(n) \, \eta_{\nu \kappa}\, \mathcal{I}_\nu \, \bar{z}_\nu(n) , \ n \in [k:k+N-1], \nonumber 
\end{align} 
and hence, the difference $\Delta\bar{z}_\kappa(n)=\bar{z}_\kappa(n)-\bar{z}_\kappa^+(n)$ in power demand for all MGs. We are interested in updating $\bar{z}$ in such a way that our cost function~\eqref{OP_upper_cost_function},
\begin{align}
	 \mathcal{J}(\bar{z}, \delta^\star) &= \sum_{n=k}^{k+N-1} 
	 \sum_{\kappa=1}^{\Xi} \!\left(\! \bar{\zeta}(n) \, \mathcal{I}_\kappa 
	 \!-\!  \sum_{\nu=1}^{\Xi} \delta^\star_{\nu \kappa}(n) \, \eta_{\nu \kappa}\, \mathcal{I}_\nu \, \bar{z}_\nu(n) 
	 \!\right)^{\!2} \nonumber \\
	& =   \sum_{\kappa = 1}^{\Xi} \mathcal{I}_\kappa^2 \left\| \bar{\zeta} - \bar{z}_\kappa^+ \right\|_2^2=\sum_{\kappa = 1}^{\Xi} \mathcal{I}_\kappa^2 \left\| (\bar{\zeta} +\Delta\bar{z}_\kappa)-\bar{z}_\kappa \right\|_2^2 \nonumber 
\end{align}
is minimized further. 
One could think about fixing $\delta$ and finding an optimal $\bar{z}$. This, however, leads to an optimization problem not avoiding communication and coupling all microgrids. The trick here is now to fix not only the $\delta$ but also the $\bar{z}$-components from all the microgrids but one. 
This leads to optimizing $\mathcal{I}_\kappa^2 \left\| (\bar{\zeta} +\Delta\bar{z}_\kappa)-\bar{z}_\kappa \right\|_2^2$ locally in each MG, where $\Delta \bar{z}_\kappa$ is computed by using the $\bar{z}_\kappa$'s and~$\delta$'s from the previous optimization step.
Intuitively, the difference $\Delta\bar{z}_\kappa$, $\kappa \in [1:\Xi]$, can be interpreted as an additional load or generation, and, therefore, as a change of the desired power demand profile for MG~$\kappa$.  
This yields the modified lower-level optimization problem
\begin{align}\label{OP_lower_modified}
	\min_{z^{(\kappa)} \in \mathbb{D}^{(\kappa)}} \quad g_\kappa(\bar{z}_\kappa) = \left\|  \bar{\zeta}_\kappa^+-\bar{z}_\kappa  \right\|_2^2,
\end{align}
where $\bar{\zeta}^+_\kappa=\bar{\zeta}+\Delta \bar{z}_\kappa$
In this formulation the updated reference trajectories~$\bar{\zeta}_\kappa^+$, $\kappa \in [1:\Xi]$, differ among the single MGs and depend on a given $\delta$ and a given previous~$\bar{z}_\kappa$. The solution of the newly derived lower-level optimization problem can be solved with ADMM for all microgrids independently and in a parallel way.  

Based on the updated reference value we solve~\eqref{OP_lower_modified} and~\eqref{OP_upper} to improve the battery usage and the energy exchange and repeat the optimization until some terminal condition is satisfied, e.g. performance improvement less than  a pre-defined tolerance or maximal number of iterations exceeded. This procedure is summarized in Algorithm~1. %
Note that we only update the reference~$\bar{\zeta}$ on the lower level, since the upper-level optimization problem~\eqref{OP_upper_cost_function} does not change. 
\begin{algorithm}%
{\bf Algorithm~1} Iterative bidirectional optimization scheme \\[-3mm]
\rule{0.49\textwidth}{0.5pt}\\
{\bf Input}: Current time instance $k \in \mathbb{N}_0$, current SoC $x_{\kappa_i}(k) \in \mathbb{X}_{\kappa_i}$, prediction horizon $N \in \mathbb{N}_{\geq 2}$, predicted net consumption $(w_{i_\kappa}(k),\ldots,w_{i_\kappa}(k+N-1))^\top \in \mathbb{R}^N$, ${i_\kappa} \in [1\!:\!\mathcal{I}_\kappa]$, $\kappa \in [1\!:\!\Xi]$, reference trajectory $(\bar{\zeta}(k),\ldots,\bar{\zeta}(k+N-1))^\top \in \mathbb{R}^N$, maximal number $j_{\max} \in \mathbb{N}$ of iterations, and tolerance $\varepsilon > 0$.
\begin{enumerate}
	\item[{\bf Initialization}:]
	\item Set $j = 0$ and $\delta^0(n)= I_\Xi$ for all $n \in [k:k+N-1]$.
	\item \emph{Lower level (parallel in $\kappa$).} Compute $\bar{z}_\kappa^j$ as the solution of~\eqref{OP_lower} using ADMM. 
	\item \emph{Upper level.} Given $\bar{z}_\kappa^j$, solve~\eqref{OP_upper} for $\delta^{j}$ using SQP.
	\item[{\bf While}] $\quad j < j_{\max}$ and $\mathcal{J}(\bar{z}^{j-1},\delta^{j-1}) - \mathcal{J}(\bar{z}^j, \delta^{j}) > \varepsilon$
	\item[{\bf Do}:]
	\item \emph{Lower level (parallel in $\kappa$).} 
	\begin{enumerate}
		\item Compute $\bar{\zeta}_\kappa^+$ from $\bar{z}_\kappa^j$
		and $\delta^j$.
		\item Solve~\eqref{OP_lower_modified} using ADMM and send $\bar{z}_{\kappa}^{j + 1}$ to the upper level.
	\end{enumerate}
	\item \emph{Upper level.} Given $\bar{z}_\kappa^{j+1}$, solve~\eqref{OP_upper} for $\delta^{j + 1}$ using SQP.
	\item $j \to j+1$
\end{enumerate}
\label{alg:opt_scheme}
\end{algorithm}

Neither convergence nor the interpretation of a potential limit of Algorithm~1 is clear a priori. Figure~\ref{fig:OptLoop_convergence}, 
\begin{figure}[ht]
\centering
\includegraphics[width=0.47\textwidth]{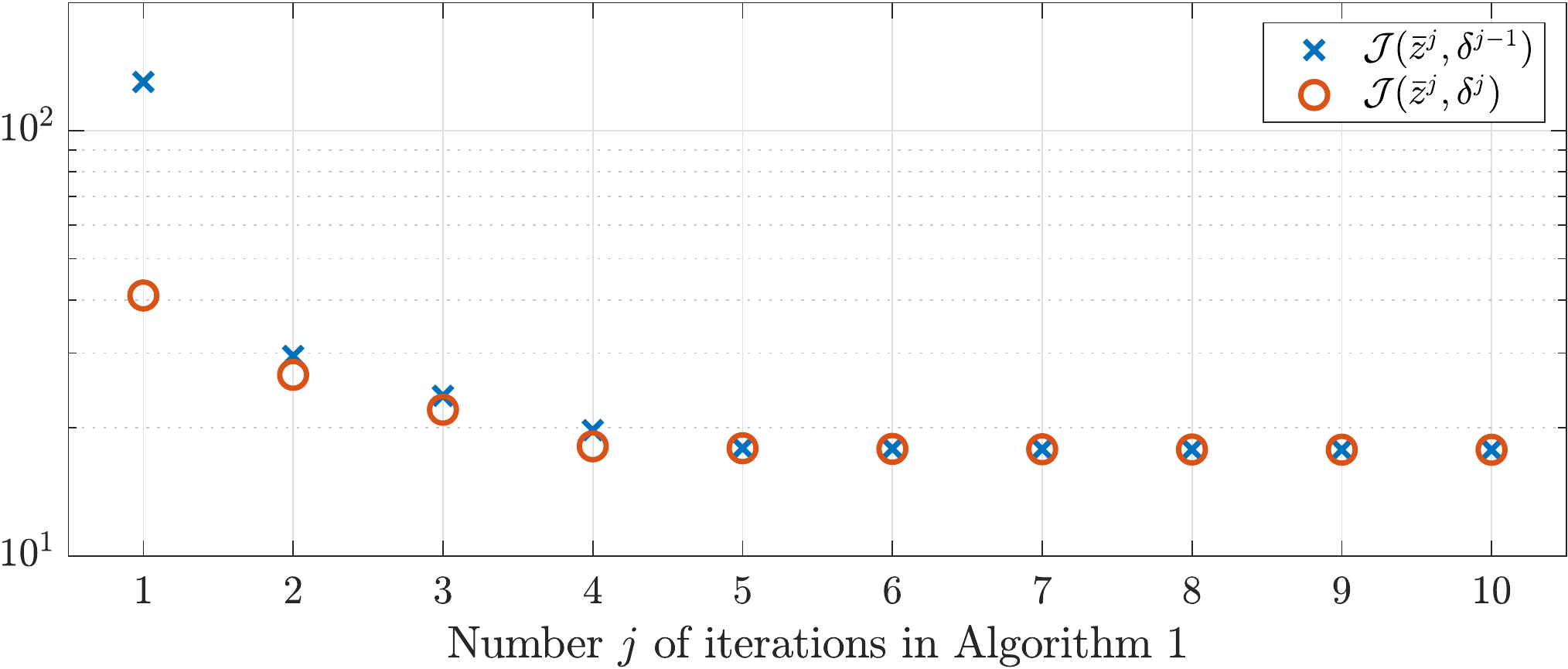}
\caption{Evolution of the costs before and after energy exchange computed according to the bidirectional optimization scheme described in Algorithm~1, i.e. $\mathcal{J}(\bar{z}^j,\delta^{j-1})$ and $\mathcal{J}(\bar{z}^j,\delta^{j})$, resp. Note that $\mathcal{J}(\bar{z}^1, \delta^0)$ yields the costs without microgrid coupling.}
\label{fig:OptLoop_convergence}
\end{figure}
however, experimentally shows convergence of the proposed scheme and a continuous improvement of the upper-level performance index. 
Here, we ran 10 iterations of the optimization scheme and plotted both the objective function values before and after the energy exchange within each iteration. %
The values stagnate after four iterations indicating that additional iterations do not further improve the overall performance. 
The next subsection elaborates on how to solve~\eqref{OP_lower_modified} in a fully distributed way using ADMM.

\begin{remark}
The results displayed in Figure~\ref{fig:OptLoop_convergence} and our numerical investigations indicate convergence to the global minimum of~\eqref{OP_full}. However, a formal (and rigorous) derivation of this conjecture is left for future research. 
\end{remark}

\subsection{Distributed optimization via ADMM}\label{sec:ADMM}

In this section we briefly discuss how to solve the lower-level optimization problem~\eqref{OP_lower} or~\eqref{OP_lower_modified} using an Alternating Direction Method of Multipliers (ADMM) approach. We consider a single MG and therefore omit the index~$\kappa$. Since the averaged output quantity appears in the objective function~\eqref{OP_lower} or~\eqref{OP_lower_modified}, we need to introduce an auxiliary variable~$a$ in order to decouple the lower-level optimization in the following way,
\begin{subequations}\label{OP_ADMM}
\begin{align}
	\min_{z,a} \quad &  g(\bar{a}) = \|\bar{a} - \bar{\zeta}\|_2^2 \\
	\mathrm{s.t.} \quad & \frac{1}{\mathcal{I}} \sum_{i=1}^{\mathcal{I}} a_i - \bar{a} = 0, \quad z_i - a_i = 0 \\
	& z_i \in \mathbb{D}_i \quad \forall \, i \in [1:\mathcal{I}]. \label{OP_ADMM_c}
\end{align}
\end{subequations}
Note that~\eqref{OP_ADMM_c} is a short-hand notation for the battery dynamics~\eqref{SysDyn}-\eqref{Constraints}, and yields a fully decoupled constraint in the variable~$z$.
ADMM is an optimization scheme to solve~\eqref{OP_ADMM} based on the augmented Lagrangian $\mathcal{L}_\rho : \mathbb{R}^{\mathcal{I} N} \times \mathbb{R}^{\mathcal{I} N} \times \mathbb{R}^{\mathcal{I} N} \to \mathbb{R}$,  for $\rho > 0$,
\begin{align}
	\mathcal{L}_\rho(z,a,\lambda) = g(\bar{a}) + \sum_{i=1}^\mathcal{I} \left( \lambda_i^\top (z_i-a_i) + \frac{\rho}{2} \left\| z_i-a_i \right\|_2^2 \right),\nonumber
\end{align}
in a distributed way, cf.~\cite{BoydPari11}. Following~\cite{BrauFaul18}, the ADMM algorithm for~\eqref{OP_ADMM} yields the three-step iteration 
$\ell \mapsto \ell + 1$, 
\begin{subequations}\label{ADMM_scheme}
\begin{align}
	& z_{i}^{\ell + 1} =  \argmin_{z_{i} \in \mathbb{D}_{i}} z_i^\top\lambda_i^\ell + \frac{\rho}{2} \left\| z_{i} - a_{i}^\ell \right\|_2^2 
	\label{ADMM_localOpt} \\
	 & a^{\ell + 1}	=  \argmin_{a \in \mathbb{R}^{\mathcal{I}N}} g(\bar{a}) - \sum_{i=1}^\mathcal{I} a_i^\top \lambda_i^\ell + \frac{\rho}{2}\| z_i^{\ell+1} - a_i \|_2^2 \label{ADMM_scheme_2}\\
	& \lambda_i^{\ell + 1} =  \lambda_i^\ell + \rho (z_i^{\ell + 1} - a_i^{\ell + 1})
\end{align}
\end{subequations}
until some termination condition is satisfied. Note that~\eqref{ADMM_scheme_2} is an unconstrained optimization problem and can be solved explicitly. The problem~\eqref{ADMM_localOpt} can be solved in parallel by each battery in the MG introduced for our model in Section~\ref{ch:res}. Note that scheme~\eqref{ADMM_scheme} assumes communication within the MG, more precisely, each system~$i$, $i \in [1:\mathcal{I}]$, sends its optimal solution $z_i$ to the CE and receives both the updated auxiliary $a_i$ and dual variable $\lambda_i$ in return. The variant discussed in~\cite{BrauFaul18} avoids unnecessary communication overhead by returning a broadcast variable 
instead, which only incorporates information on the aggregated values. 

According to Theorem~3.1 in~\cite{BrauFaul18} the optimization scheme~\eqref{ADMM_scheme} converges in the following sense.

\begin{theorem}\label{thm:ADMM}
Consider Problem~\eqref{OP_ADMM} with $g$ being strictly convex, closed and proper 
and let the iterates $(z^\ell,a^\ell,\lambda^\ell)$ be computed according to~\eqref{ADMM_scheme}. Then the following following statements hold true:
\begin{enumerate}
	\item $(z^\ell - a^\ell)_{\ell \in \mathbb{N}_0}$ converges to zero for $\ell \to \infty$,
	\item $(g(\bar{a}^\ell))_{\ell \in \mathbb{N}_0}$ converges to the optimal value $g^\star$ of~\eqref{OP_ADMM},
	\item $(\lambda^\ell)_{\ell \in \mathbb{N}_0}$ converges to the dual optimal $\lambda^\star$ of~\eqref{OP_ADMM}.
\end{enumerate}
\end{theorem}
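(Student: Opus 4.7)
The plan is to view this statement as a specialization of the classical convergence theorem for two-block ADMM (Boyd et al.~\cite{BoydPari11}; the version invoked in~\cite{BrauFaul18} is essentially the same), so the bulk of the work is to verify that the splitting induced by~\eqref{OP_ADMM} together with the iteration~\eqref{ADMM_scheme} matches the hypotheses of that result.

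First I would recast~\eqref{OP_ADMM} into the canonical two-block form $\min\, f(z) + h(a,\bar{a})$ subject to the linear coupling $z_i - a_i = 0$ for all $i \in [1\!:\!\mathcal{I}]$. Set $f(z) = \iota_{\mathbb{D}}(z)$, the indicator of the product set $\mathbb{D} = \mathbb{D}_1 \times \cdots \times \mathbb{D}_\mathcal{I}$, and $h(a,\bar{a}) = g(\bar{a}) + \iota_{C}(a,\bar{a})$ with $C = \{(a,\bar{a}) : \bar{a} = \frac{1}{\mathcal{I}} \sum_i a_i\}$ absorbing the averaging constraint. Since each $\mathbb{D}_i$ has already been shown to be non-empty, compact and convex, so is $\mathbb{D}$, and hence $f$ is closed, proper and convex; $C$ is an affine subspace, so $\iota_C$ is closed, proper and convex, and combined with the hypothesis on $g$ the function $h$ is closed, proper and convex (and strictly convex in the $\bar{a}$ component). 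A direct calculation shows that scheme~\eqref{ADMM_scheme} is the standard ADMM iteration for this splitting, once $\bar{a}$ is eliminated through $C$ in the $a$-update~\eqref{ADMM_scheme_2}.

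Next I would establish existence of a Karush--Kuhn--Tucker point. Feasibility is trivial (pick any $z \in \mathbb{D}$, set $a = z$, $\bar{a} = \frac{1}{\mathcal{I}}\sum_i a_i$), and compactness of $\mathbb{D}$ yields a primal minimizer. Since all coupling constraints are affine and the problem is convex, strong duality holds and a multiplier $\lambda^\star$ exists, providing a saddle point of the unaugmented Lagrangian. The proof then proceeds along the standard Lyapunov route with
\begin{equation*}
V^\ell \;=\; \tfrac{1}{\rho}\|\lambda^\ell - \lambda^\star\|_2^2 \;+\; \rho\|a^\ell - a^\star\|_2^2,
\end{equation*}
for which a short algebraic manipulation (using the optimality of the $z$- and $a$-updates in~\eqref{ADMM_scheme} and the saddle-point inequality at $\lambda^\star$) yields the monotone-decrease estimate
\begin{equation*}
V^\ell - V^{\ell+1} \;\geq\; \rho\|z^{\ell+1} - a^{\ell+1}\|_2^2 \;+\; \rho\|a^{\ell+1} - a^\ell\|_2^2 .
\end{equation*}
Telescoping gives summability of both right-hand terms, which immediately yields~(1). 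Substituting the iterates into the saddle-point inequality sandwiches $g(\bar{a}^\ell) - g^\star$ between quantities tending to zero, proving~(2).

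The step I expect to be the main obstacle is~(3). Boundedness of $V^\ell$ only gives boundedness of $(\lambda^\ell)$, not convergence to a specific limit. Here the strict convexity of $g$ is essential: it forces uniqueness of $\bar{a}^\star$, which via the KKT conditions pins down the multiplier of the averaging constraint and hence the unique dual optimal $\lambda^\star$. Combined with the fact that the monotonicity of $V^\ell$ holds with respect to \emph{every} saddle point, uniqueness forces every cluster point of $(\lambda^\ell)$ to coincide with $\lambda^\star$, giving~(3). Verifying that the broadcast-style variant of~\eqref{ADMM_scheme} used in~\cite{BrauFaul18} produces the same iterates as the generic two-block ADMM is a brief bookkeeping argument, so the rest of the proof reduces to a direct invocation of the cited convergence theorem.
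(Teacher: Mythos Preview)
Your proposal is correct and aligns with the paper's treatment: the paper does not prove this theorem itself but simply cites it as Theorem~3.1 of~\cite{BrauFaul18} and refers to~\cite[Section~3]{BoydPari11} and~\cite[Appendix~C]{BertTsit89} for the underlying argument, which is precisely the two-block ADMM convergence result you invoke. Your sketch of the Lyapunov argument and the uniqueness discussion for~(3) go beyond what the paper spells out, but they are consistent with the cited sources and not needed for the paper's purposes.
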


According to~\cite{BrauFaul18} and the references~\cite[Section~3]{BoydPari11} and~\cite[Appendix~C]{BertTsit89} therein, problem~\eqref{OP_lower} fulfils the assumptions of Theorem~\ref{thm:ADMM}.

\section{Surrogate models for ADMM}\label{sec:surrogates}
This section is dedicated to surrogate models for the optimization routine~\eqref{ADMM_scheme} within a single MG. For simplicity of notation we omit the index~$\kappa$.

Due to the distributive structure of ADMM, the residential energy systems do not need to share information with their neighbours but only with the CE, see also the star-structure in Figure~\ref{fig:RES-Network}. In each iteration~$\ell$ of ADMM, subsystem~$i$ has to transmit its solution $z_{i}^\ell$ of~\eqref{ADMM_localOpt} to the CE. The optimization scheme presented in Algorithm~1, however, suggests to run ADMM more than once in order to improve the performance w.r.t. to the objective function~\eqref{OP_upper_cost_function}. In order to avoid unnecessary communication, we propose to use surrogate models to approximate the optimization routine~\eqref{ADMM_scheme}. More precisely, we are interested in a function which approximates $\varphi : \mathbb{R}^N \times \mathbb{R}^{\mathcal{I}} \times \mathbb{R}^N \to \mathbb{R}^N$: 
\begin{align}\label{eq:mapping}
	\varphi(\bar{w},x(k),\bar{\zeta}) \quad = \quad \bar{z}, 
\end{align}
for all feasible $(\bar{w},x(k),\bar{\zeta}) \in \mathbb{R}^N \times \mathbb{X} \times \mathbb{R}^N$. Note that we do neither assume knowledge on the local net consumption~$w_i$, $i \in [1:\mathcal{I}]$, nor on the future SoC~$x(n)$, $n > k$.

Figure~\ref{fig:comparisonOL} (top) shows that if the approximation~\eqref{eq:mapping} is sufficiently accurate, the impact on the performance of the  optimization scheme is negligible. 
\begin{figure}[ht]
\centering
\includegraphics[width=0.4\textwidth]{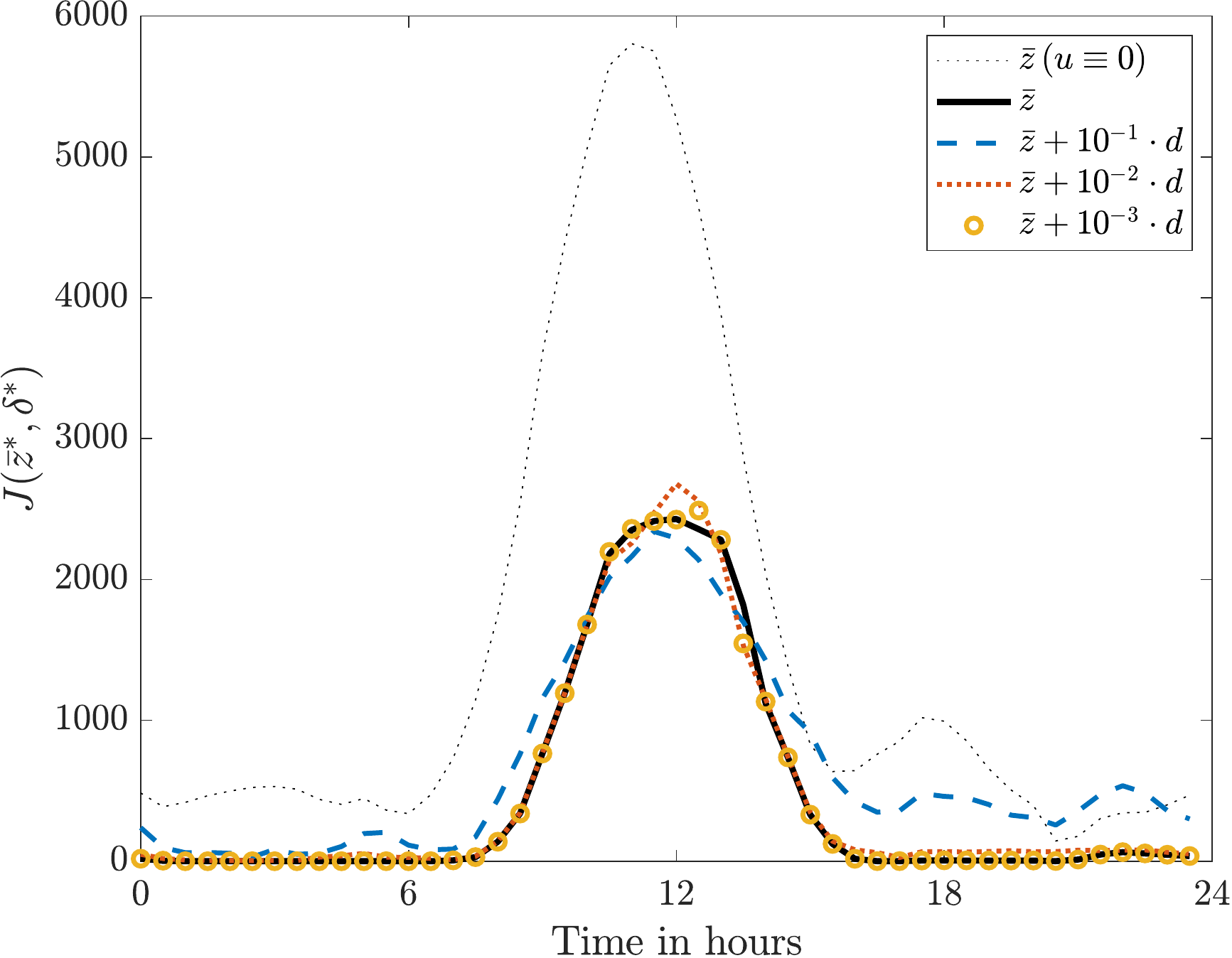}\\
\includegraphics[width=0.4\textwidth]{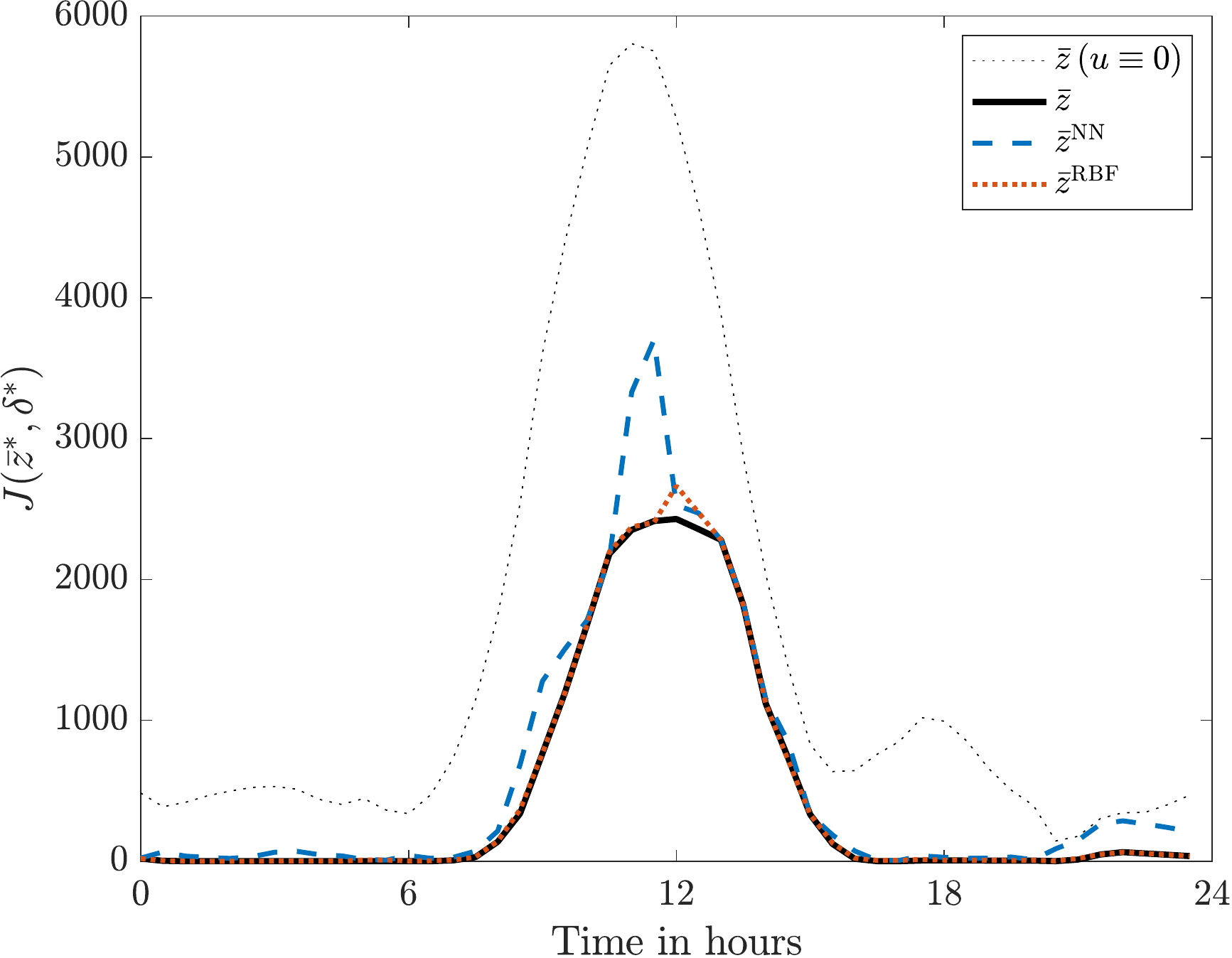}
\caption{Effect of mapping error in~\eqref{eq:mapping} %
(top) and of the approximation via radial basis functions (RBFs) and a neural network (NN) on the open-loop performance $\mathcal{J}(\bar{z},\delta)$ within 48 consecutive time steps (bottom). We use $T = 0.5$h in \eqref{eq:batteryDynamics}.}
\label{fig:comparisonOL}
\end{figure}
Here, the costs $\mathcal{J}(\bar{z},\delta)$ after optimization are visualized for 48 consecutive time steps (equals $24$-hours simulation time). In the experiment, we disturbed the ADMM solution in Algorithm~1 by uniformly distributed additive noise, i. e., $\bar{z} + 10^{-p} \cdot d$, where the vector $d \sim \mathcal{U}(-1,1)$, and $p \in \mathbb{N}_0$ denotes the intensity of the disturbance.%

Note that~\eqref{eq:mapping} might yield approximations to the solution~$\bar{z}$ that either aggravate the performance w.r.t.~\eqref{OP_lower} compared to the solution~$\bar{w}$ associated with $u \equiv 0$ or  solutions that correspond to an infeasible control $\hat{u} \notin \mathbb{U}$. As a remedy, we propose to apply ADMM once after replacing it by a surrogate in the  optimization scheme. More precisely, first we run Algorithm~1 using a surrogate in Step~4(b) until the while loop terminates and then we additionally repeat Steps~4 and~5 using ADMM.

\subsection{Well-posedness}

The following proposition states that for equality in~\eqref{eq:mapping}, a proper mapping is defined. For a concise notation we replace the index $\kappa_i$ by $i$ here.
\begin{proposition}
\label{prop1}
Consider $\varphi$ given by~\eqref{eq:mapping}, where $\bar{z}$ describes the optimal solution of~\eqref{OP_lower} computed via ADMM, i.e. $\bar{z} = \bar{z}(u^\star)$. We assume all hyper-parameter to be fixed meaning that $\{T, \alpha_i, \beta_i, \gamma_i, C_i, \bar{u}_i, \ubar{u}_i\}$ in \eqref{SysDyn}-\eqref{Constraints} are constant over time for all $i \in [1:\mathcal{I}]$. Then $\varphi$ is a mapping, i.e. for all $(\bar{w},x(k),\bar{\zeta}) \in \mathbb{R}^N \times \mathbb{X} \times \mathbb{R}^N$, there exists a uniquely determined $\bar{z} \in \mathbb{R}^N$ such that $\bar{z}$ is the solution to the optimization problem~\eqref{OP_lower}.
\end{proposition}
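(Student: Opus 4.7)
The plan is to show that, for any admissible input, problem \eqref{OP_lower} possesses a uniquely determined optimizer in the averaged variable $\bar{z}_\kappa$. I decompose this into three pieces.

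First, I would verify existence. The text already states that each $\mathbb{D}_{\kappa_i}$ is non-empty, compact, and convex; hence so is the Cartesian product $\mathbb{D}^{(\kappa)}$. The averaging map $A\colon z^{(\kappa)} \mapsto \bar{z}_\kappa = \frac{1}{\mathcal{I}_\kappa}\sum_{i} z_{\kappa_i}$ is continuous and linear, so the image $\mathcal{A} := A(\mathbb{D}^{(\kappa)})$ is again non-empty, compact, and convex. Under the hypothesis that the hyper-parameters are fixed, the triple $(\bar{w}, x(k), \bar{\zeta})$ determines both $\mathbb{D}^{(\kappa)}$ (via \eqref{SysDyn}-\eqref{Constraints} and the initial condition $x(k)$) and the objective $g(\bar{z}_\kappa) = \|\bar{\zeta} - \bar{z}_\kappa\|_2^2$. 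Since $g$ is continuous, Weierstrass' theorem yields at least one minimizer $\bar{z}^\star \in \mathcal{A}$.

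Second, and the heart of the argument, I would establish uniqueness at the \emph{averaged} level. Note that the pulled-back objective $z^{(\kappa)} \mapsto g(A z^{(\kappa)})$ is only convex but generally not strictly convex in $z^{(\kappa)}$ whenever $\mathcal{I}_\kappa > 1$, since $A$ has a non-trivial kernel; hence several primal optima $z^{(\kappa)}$ may coexist. However, viewed as a function on $\mathbb{R}^N$, $g$ is strictly convex. Suppose for contradiction that two distinct points $\bar{z}^{(1)}, \bar{z}^{(2)} \in \mathcal{A}$ both attain the optimum. By convexity of $\mathcal{A}$, the midpoint $\tfrac{1}{2}(\bar{z}^{(1)} + \bar{z}^{(2)}) \in \mathcal{A}$, and strict convexity of $g$ would produce a strictly smaller objective value, a contradiction. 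Therefore the optimal average $\bar{z}^\star$ is unique.

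Third, I would tie this back to the ADMM-produced output required by the proposition. Invoking Theorem~\ref{thm:ADMM}, the iterates satisfy $g(\bar{a}^\ell) \to g^\star$ and $z^\ell - a^\ell \to 0$. Since $g$ is continuous, coercive, and strictly convex on $\mathbb{R}^N$, its sublevel sets $\{g \le g^\star + \varepsilon\}$ are compact and shrink to the singleton $\{\bar{z}^\star\}$ as $\varepsilon \to 0$; consequently $\bar{a}^\ell \to \bar{z}^\star$, and then also $\bar{z}^\ell \to \bar{z}^\star$. Hence $\varphi(\bar{w}, x(k), \bar{\zeta}) = \bar{z}^\star$ is well-defined.

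The main obstacle I anticipate is conceptual rather than technical: one must draw a clean line between non-uniqueness of the full primal optimum $u^\star$ (and therefore of $z^\star$), which is genuinely possible because distinct charging/discharging distributions across the households can realize the same aggregate, and uniqueness of the \emph{averaged} quantity $\bar{z}^\star$, which is what the proposition asserts. The strict convexity of $g$ on $\bar{z}$-space paired with the convexity of the feasible image $\mathcal{A}$ is exactly the lever that makes this distinction work, and framing the proof around the reduced problem on $\mathcal{A}$ rather than on $\mathbb{D}^{(\kappa)}$ is the cleanest way to present it.
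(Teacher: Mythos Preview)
Your argument is sound on existence and on uniqueness of the averaged optimum, and in fact cleaner than the paper's on that front: the paper simply asserts that ``ADMM yields the unique solution of~\eqref{OP_lower}'' with a reference to~\cite{BoydPari11}, whereas you correctly observe that the primal problem in $z^{(\kappa)}$ is not strictly convex and isolate strict convexity at the level of $\bar z_\kappa$ on the convex image~$\mathcal A$.

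There is, however, one genuine gap. You claim that $(\bar w, x(k), \bar\zeta)$ determines $\mathbb D^{(\kappa)}$; it does not. Each $\mathbb D_{\kappa_i}$ depends on the \emph{individual} net consumption $w_{\kappa_i}$ through~\eqref{eq:system_output}, so two configurations $(w_{\kappa_1},\dots,w_{\kappa_{\mathcal I_\kappa}})$ with the same average $\bar w$ produce different feasible sets $\mathbb D^{(\kappa)}$. What you actually need---and what the paper's proof supplies via its expansion of the dynamics---is that the \emph{image} $\mathcal A = A(\mathbb D^{(\kappa)})$ depends only on $\bar w$ and $x(k)$. From~\eqref{eq:system_output} one has
\[
\bar z(n) \;=\; \bar w(n) + \frac{1}{\mathcal I_\kappa}\sum_{i}\bigl(u_i^+(n)+\gamma_i u_i^-(n)\bigr),
\]
and the set of feasible control trajectories $(u_i^+,u_i^-)$ is fixed by $x(k)$ and the hyper-parameters alone, independently of $w$. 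Hence $\mathcal A = \bar w + S(x(k))$ for a set $S$ that does not see the individual $w_{\kappa_i}$. Once this translation structure is made explicit, your reduced problem on $\mathcal A$ is determined by $(\bar w, x(k), \bar\zeta)$ and the rest of your argument goes through unchanged.

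In short, the paper and you cover complementary halves: the paper is careful about why the reduced problem depends only on the aggregate inputs but hand-waves uniqueness; you are rigorous on uniqueness but gloss over the aggregation step. Splicing the paper's expansion into your first paragraph yields a complete proof.
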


\begin{proof}
First note that ADMM yields the unique solution of~\eqref{OP_lower}, see e.g.~\cite{BoydPari11}. Furthermore, there are no constraints on $z_i$, $i \in [1:\mathcal{I}]$, and the future SoC can be interpreted as an affine function of the current SoC and the future (dis-) charging rate. Hence, expansion of~\eqref{eq:batteryDynamics} and averaging of~\eqref{eq:system_output} yield,
\begin{align}
	& \min_u \quad  \left\| \bar{z}(u) - \bar{\zeta} \right\|_2^2, \quad \text{ subject to}\nonumber \\
	& x_i(k\!+\!1\!+\!n) =  \alpha_i^{n+1} x_i(k) \!+\! T \! \sum_{\ell=k}^{k+n} \!\alpha_i^{n+k-\ell} \big(\beta_i u_i^+(\ell) \!+\! u_i^-(\ell)\!\big), \nonumber \\
	& x_i(k) = \hat{x}_i, \ \text{ and constraints~\eqref{Constraints}}, \nonumber \\
	& \bar{z}(k+n) = \bar{w}(k+n) + \bar{u}^+(k+n) + \bar{\gamma} \bar{u}^-(k+n), \nonumber \\ 
	& n \in [0:N-1], \nonumber
\end{align}
where $\bar{\cdot}$ denotes the corresponding average value w.r.t. all subsystems, in particular $\bar{z}(n) = \frac{1}{\mathcal{I}} \sum_{i=1}^\mathcal{I} z_i(n)$. This representation of~\eqref{OP_lower} illustrates that the (predicted) average values~$\bar{\zeta} = (\bar{\zeta}(k), \ldots, \bar{\zeta}(k+N-1))^\top$ and~$\bar{w} = (\bar{w}(k), \ldots, \bar{w}(k+N-1))$ and the current SoC~$\left\{x_i(k)\right\}_{i=1}^\mathcal{I}$, uniquely determine the optimal solution~$\bar{z}(u^\star)$ obtained by ADMM.
\end{proof}

\begin{remark}
Note that $\bar{\zeta} \mapsto \bar{z}$ as introduced in~\cite{GrunSaue19} does not define a mapping since the solution~$\bar{z}$ of~\eqref{OP_lower} not only depends on the reference value~$\bar{\zeta}$ but also on the future net consumption~$\bar{w}$ and the current SoC~$x_{i}(k)$.
\end{remark}

\subsection{Radial basis functions approximation}

Radial Basis Functions (RBFs) are used to interpolate functions based on a set of sampling data. We briefly recap some basics on RBFs. For a detailed introduction to theory and application see e.g.~\cite{Buh2003}, for a similar approach where RBFs are used to replace ADMM we refer to~\cite{GrunSaue19}.

Let $M \in \mathbb{N}$ denote the number of samples. Then, the interpolation function of~\eqref{eq:mapping} is given as the sum of basis functions $\psi_m : \mathbb{R}^N \times \mathbb{X} \times \mathbb{R}^N \to \mathbb{R}$, $m \in [1:M]$, and a regularization term $q : \mathbb{R}^N \times \mathbb{X} \times \mathbb{R}^N \to \mathbb{R}^N$. More precisely,
\begin{align}
	\bar{z} \approx \varphi^{\mathrm{RBF}}(\chi) = \sum_{m=1}^M \psi_m(\chi) \alpha_m + q(\chi), \label{eq:rbf_approx} 
\end{align}	
where $\chi = (\bar{w},x(k),\bar{\zeta})$ is the joint inputs of Proposition~\ref{prop1}, and $\alpha_m \in \mathbb{R}^N$, $m \in [1:M]$. The basis functions are %
so-called radial basis functions of the form, $\psi_m(\chi) = \psi(\left\| \chi - \chi_m \right\|)$, where the kernel $\psi$ yields support close to the sampling data $\chi_m$, $m \in [1:M]$. We choose an affine linear regularization $q(\chi) = \beta_0 + B\chi$. Note that different choices are possible. The missing parameters $\alpha_m$, $\beta_0$ and~$B$ are determined by interpolation conditions, cf.~\cite{GrunSaue19,Buh2003}.

In Figure~\ref{fig:RBF_NN_fit}, a possible fit via RBFs is visualized. Here, we interpolated given data from two-weeks of optimization ($4540$ data points) based on sampling data picking each \mbox{$25$-th} data point to train~\eqref{eq:rbf_approx}. Then, we tested~$\varphi^{\mathrm{RBF}}$ on the following day, and plotted the fitting.
\begin{figure}[ht]
\centering
\includegraphics[width=0.4\textwidth]{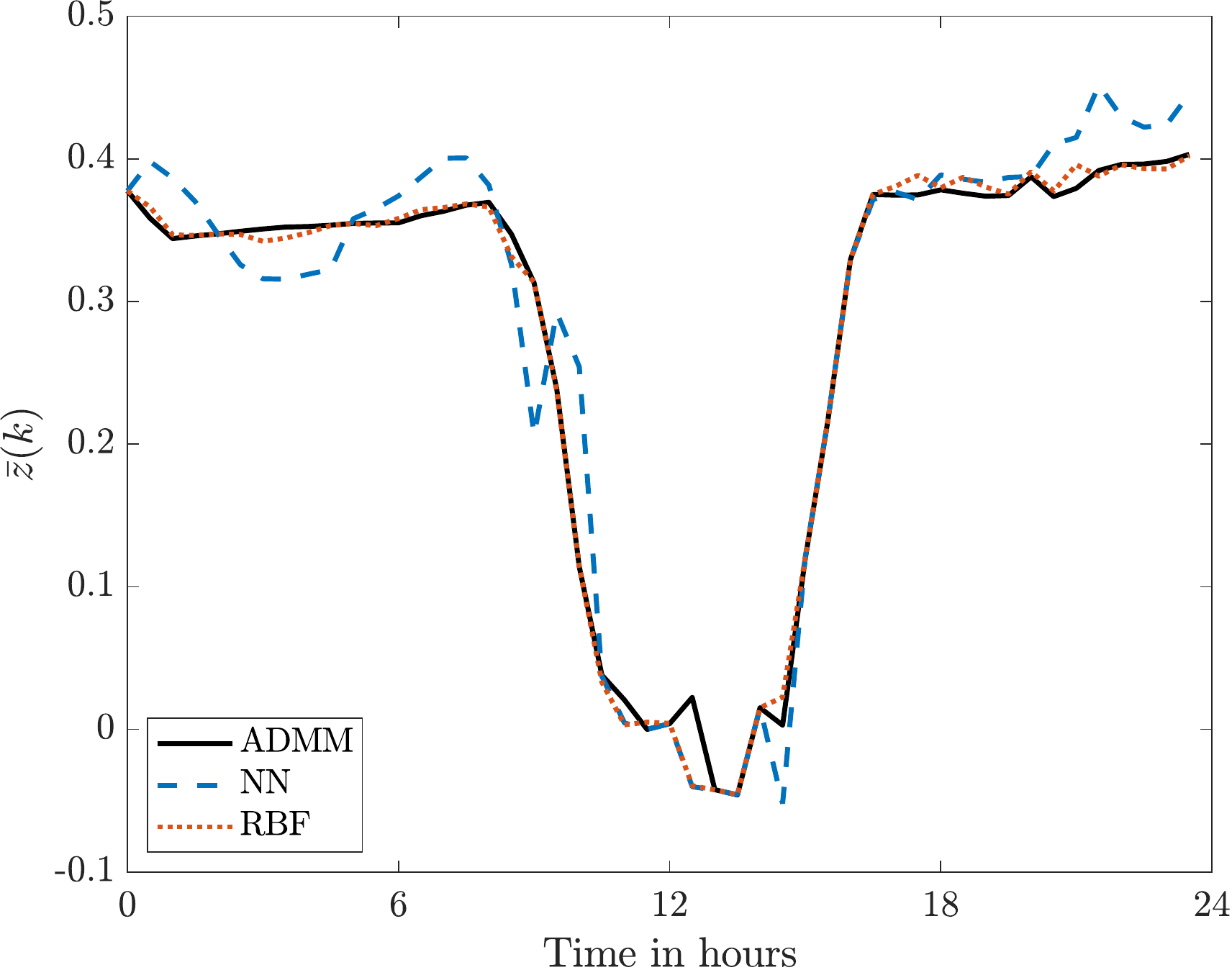} 
\caption{RBF and neural net fitting of the first component $\bar{z}(k)$ of $\bar{z} = (\bar{z}(k),\ldots,\bar{z}(k+N-1))^\top$ within 48 consecutive time steps indicating the quality of the approximation~\eqref{eq:mapping}.}
\label{fig:RBF_NN_fit}
\end{figure}
Our implementation is based on the \texttt{Matlab} toolbox \texttt{DACE}~\cite{LopNS02}. Note that the evaluation time of the RBF approximation grows with the number of data points used. Already with 180 data points to train~\eqref{eq:rbf_approx} with $N=6$ causes the function evaluation of~$\varphi^{\mathrm{RBF}}$ to be expensive, see Table~\ref{tab:communication}. Using more data points would no longer yield an advantage over using ADMM w.r.t. computation time.

\subsection{Neural networks approximation}

Neural Networks (NNs) are a state-of-the-art method in artificial intelligence frameworks. Based on huge amounts of data $M \gg 1$ they are able to learn and recognize patterns in complex systems.
We consider a NN of $l$-layers as an approximation to the mapping~\eqref{eq:mapping}, i.e.,
\begin{align}
	& \bar{z} \approx \varphi^{\mathrm{NN}}(\chi) = \sigma\left(W^{[l]} \ ... \ \sigma(W^{[2]}\chi + b^{[2]}) \ ... \  + b^{[l]}\right),\label{eq:nn_approx}
\end{align}
where $\sigma$ denotes the sigmoid function, and the weights~$W^{[l]}$ and biases~$b^{[l]}$ are determined during the training phase. Here, the number of neurons at layer~$l-1$ and at layer~$l$ determine the number of rows and columns of $W^{[l]}$, respectively. Note that a separate neural network is trained for each MG. For an introduction to deep learning and neural networks we refer the reader to \cite{GoodBeng16,HighHigh18}. To train the NNs, we used \texttt{Matlab}'s built-in toolbox \texttt{nftool}.

The overall goal of the approximation~\eqref{eq:nn_approx} is to be sufficient in the sense of the MPC performance shown in Figure~\ref{fig:comparisonCL}. Our experiments in Figures~\ref{fig:comparisonOL} (bottom) and Figure~\ref{fig:RBF_NN_fit} show that with one hidden layer of ten neurons only, a satisfying approximation on a 24-hours time window can be achieved if the training data is large enough. Note that NNs benefit from big data. In our case study, we trained the NN only on data corresponding to two weeks.

\section{Numerical proof-of-concept}\label{sec:results}

Model Predictive Control (MPC) is a method to tackle optimal control problems on an infinite time horizon by solving a series of finite dimensional optimization problems instead, see e.g.~\cite{GruePann17} for an introduction to non-linear MPC.

\subsection{Model predictive control (MPC)}

Consider the optimal control problem~\eqref{OP_lower}. In order to provide an optimal control sequence over an arbitrary long time horizon we use MPC. To this end, at current time instance $k \in \mathbb{N}_0$ we assume the future net consumption $(w_i(k),w_i(k+1),\ldots,w_i(k+N-1))^\top \in \mathbb{R}^N$ to be predicted for all subsystems $i \in [1:\mathcal{I}]$. Based on the prediction, Algorithm~1 is executed (per MG) to determine control sequences $u_i$, $i \in [1:\mathcal{I}]$, and an exchange strategy $\delta$. Then, only the first instances $u_i(k)$ and $\delta(k)$ are implemented and the time instance is incremented. Algorithm~2 outlines this MPC scheme. 
\begin{algorithm}
{\bf Algorithm~2} MPC for coupled MGs \\[-3mm]
\rule{0.49\textwidth}{0.5pt}\\
{\bf Input}: Current time instance $k \in \mathbb{N}_0$, current SoC $x_{i}(k) \in \mathbb{X}_{\kappa_i}$, prediction horizon $N \in \mathbb{N}_{\geq 2}$, and reference trajectory $(\bar{\zeta}(k),\ldots,\bar{\zeta}(k+N-1))^\top \in \mathbb{R}^N$.
\begin{enumerate}
	\item[{\bf Repeat}:]
	\item Measure current state $x_i(k)$ and update the forecast $(w_i(k),w_i(k+1),\ldots,w_i(k+N-1))^\top$, $i \in [1:\mathcal{I}]$.
	\item Run Algorithm~1 for all MG to get optimal control sequences $u_i^\star = (u_i^\star(k),\ldots,u_i^\star(k+N-1))^\top$ for all subsystems $i \in [1:\mathcal{I}]$, and an optimal exchange strategy $\delta^\star = (\delta^\star(k),\ldots,\delta^\star(k+N-1))$.
	\item Implement $u_i^\star(k)$, $i \in [1:\mathcal{I}]$, and $\delta^\star(k)$ and shift the time instance $k \to k+1$.
\end{enumerate}
\end{algorithm}

Note that Problems~\eqref{OP_lower} or~\eqref{OP_lower_modified} and~\eqref{OP_upper} have to be solved in order to determine $\bar{z}^\star$ and $\delta^\star$ in each MPC iteration. Therefore, the open-loop costs $\mathcal{J}(\bar{z}^{\star},\delta^\star)$ can be computed in each iteration as well (cf. Figure~\ref{fig:comparisonOL}). However, since Step~3 in Algorithm~2 suggests to only implement the first instance of the controls computed in Step~2, these costs are not attained. Instead the stage costs 
\begin{align}\label{eq:CostsCL}
	\sum_{\kappa=1}^{\Xi} \left(\bar{\zeta}(k) \mathcal{I}_\kappa - \sum_{\nu=1}^{\Xi} \delta_{\nu \kappa}^\star(k) \eta_{\nu \kappa} \mathcal{I}_\nu \bar{z}_\nu^\star(k)\right)^2
\end{align}
are realized at each time step $k \in \mathbb{N}_0$ (cf. Figure~\ref{fig:comparisonCL}).

\subsection{Usage of surrogate models in MPC}

We compare the performances using ADMM, RBFs, and NNs on the lower-level, i.e. in Step~4(b) of Algorithnm~1. 
In all numerical simulations we set $T = 0.5$, $N = 6$, $\Xi = 4$, $\mathcal{I}_1 = 50$, and $\mathcal{I}_2 = \mathcal{I}_3 = \mathcal{I}_4 = 10$.\footnote{Note that this setting yields the global optimization problem~\eqref{OP_full} with more than 1000 variables.} The battery parameters were randomly chosen with mean values $C = 0.98$, $\ubar{u} = -0.24$, and $\bar{u} = 0.25$. Based on the battery capacities we set $\hat{x}_i = 0.5 C_i$. In order to incorporate losses along the transmission lines, we used the efficiency matrix,
\begin{align}
	\eta = 
	\begin{bmatrix}
	1.0   &  0.9  &  0.9  &  0.85 \\
	0.9   &  1.0  &  0.0  &  0.85 \\
	0.9   &  0.0  &  1.0  &  0.0 \\
	0.85 & 0.85 &  0.0  &  1.0
	\end{bmatrix} \text{ in } \eqref{OP_upper_cost_function}. \nonumber
\end{align}
For simplicity of the numerical computation, we only replaced the lower-level optimization routine for MG~1 and, thus, avoid training a separate surrogate model for each MG. %
We used \texttt{Matlab} for implementation. 

Results on the MPC closed loop can be found in Figure~\ref{fig:comparisonCL} and Table~\ref{tab:communication}.
\begin{figure}[ht]
\centering
\includegraphics[width=0.4\textwidth]{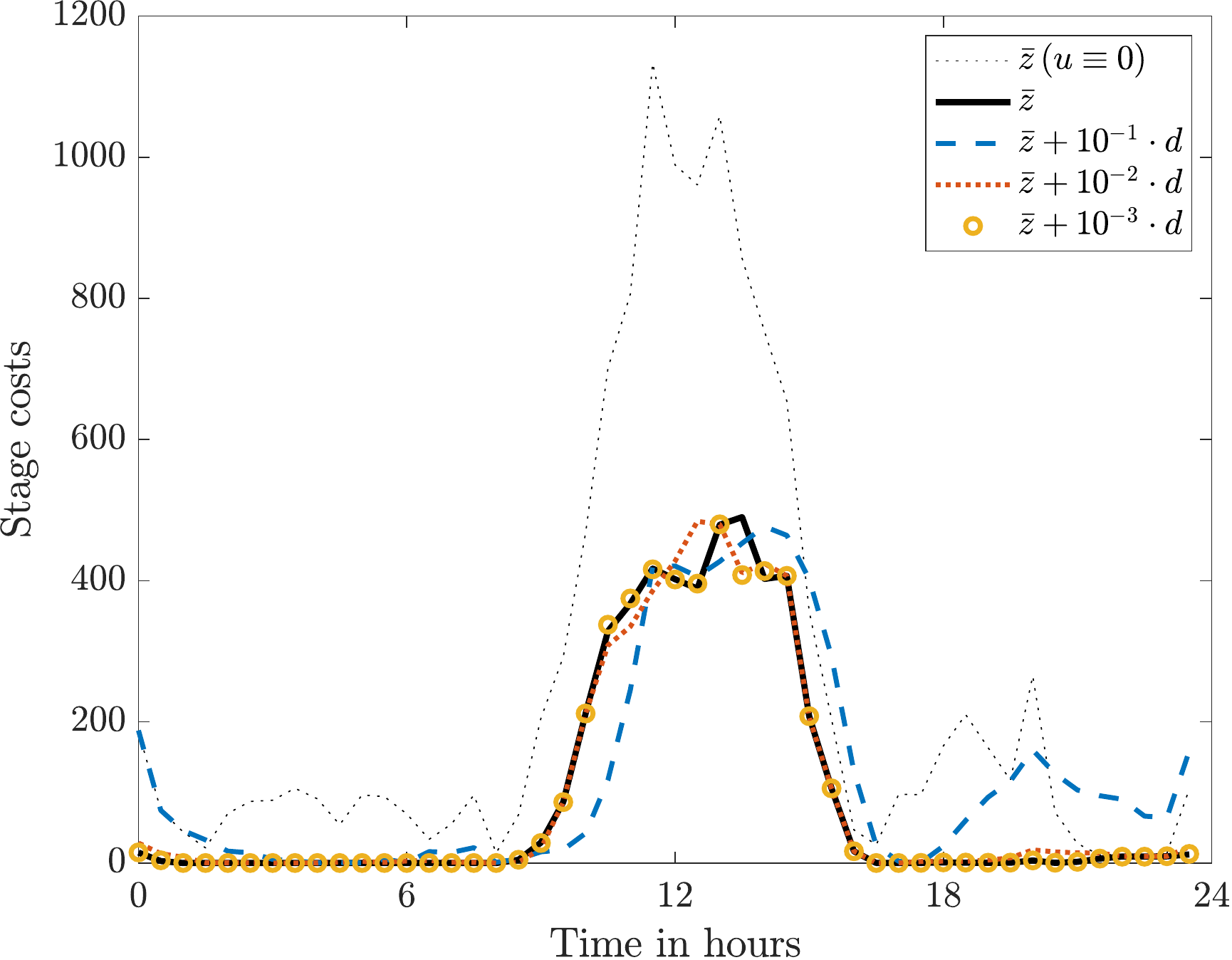}\\
\includegraphics[width=0.4\textwidth]{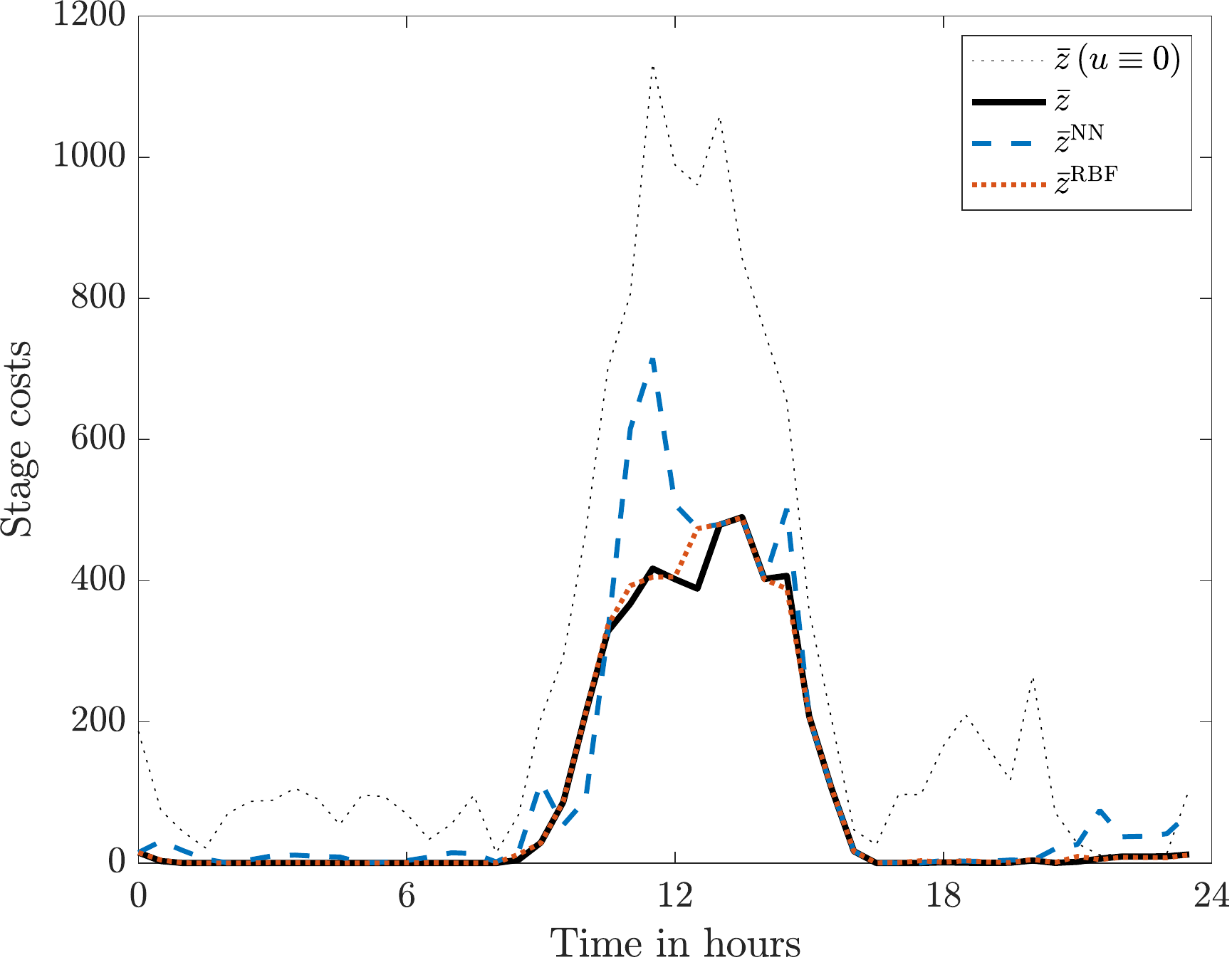}
\caption{Impact of mapping error (top) and approximation via RBF and NN (bottom) on the stage costs~\eqref{eq:CostsCL} within 48 consecutive time steps.}
\label{fig:comparisonCL}
\end{figure}
 \begin{table}[ht]
 \centering
 \begin{tabular}{c|ccc}
  & closed-loop cost & runtime [ms]  \\ 
 \hline 
 no control & 12,228 & ---  \\
 ADMM & 4,416 & 2.5  \\
 RBFs & 4,529 & 1.2  \\ 
 NNs & 5,598 & 0.05  \\ 
 \end{tabular} 
 \caption{Comparison of the summed MPC closed-loop performance $\sum_{k=0}^{47} \sum_{\kappa=1}^{\Xi} \left(\bar{\zeta}(k) \mathcal{I}_\kappa - \sum_{\nu=1}^{\Xi} \delta_{\nu \kappa}^\star(k) \eta_{\nu \kappa} \mathcal{I}_\nu \bar{z}_\nu^\star(k)\right)^2$ and runtime (per call): ADMM vs. RBFs vs. NNs.}
 \label{tab:communication}
 \end{table}
In Figure~\ref{fig:comparisonCL} the closed-loop performances of ADMM (black line) compared to perturbed ADMM, and ADMM (black line) compared to the two surrogate models are visualized. Similar to the open-loop case, small disturbances in ADMM have little impact and RBFs outperform the NN. The first column of Table~\ref{tab:communication} compares the sum of all MPC closed-loop performances using ADMM, RBFs and a NN while in column~2 the average runtimes of these approaches are reported. Note that when using a surrogate, we call ADMM once per MPC iteration. As elaborated in~\cite{BrauFaul18} in each ADMM iteration an $N$-dimensional vector has to be transmitted twice. Hence, both surrogates reduce the need for communication. Two great advantages of ADMM are that the local optimization~\eqref{ADMM_localOpt} can be parallelized and the global optimization is independent of the size of the MG. However, a single function evaluation such as~\eqref{eq:rbf_approx} or~\eqref{eq:nn_approx} is faster than running the entire ADMM optimization routine.

Note that in column~2 of Table~\ref{tab:communication} we ignored the communication between smart homes and CE which is needed to apply ADMM in practice. However, the runtime of ADMM impairs when executed in an actual smart grid while surrogates do not require additional communication.

In order to improve the performance of the NN, more sampling data has to be generated to increase the training set significantly. To avoid large offline computation times, we chose $N=6$, i.e. a prediction horizon of three hours, which is rather short compared to~\cite{BrauFaul18,GrunSaue19}.

\begin{remark}
We point out two implementation details to solve~\eqref{OP_upper} efficiently. First, note that the optimization~\eqref{OP_upper} can be parallelized in~$n$, since there is no coupling. Furthermore, we replace~\eqref{Constraint:oneway} by
\begin{align}
	\delta_{\kappa \nu}(n) \cdot \delta_{\nu \kappa}(n) \; \leq \; \varepsilon \nonumber
\end{align}
for some tolerance $\varepsilon > 0$ to smooth the feasible set. 
\end{remark}

\section{Conclusions}\label{sec:conclusions}
In this paper we recalled an  optimization problem arising in large-scale electrical networks. 
We proposed an iterative \emph{bidirectional} optimization scheme to tackle this problem in a distributed way, and showed numerically that a small error on the lower level does not have noticeable impact on the performance. 
Based on this observation, we replaced the lower-level optimization by surrogate models using radial basis functions and artificial neural networks. 
The numerical results show the potential of using these surrogates to reduce communication effort and computational time in MPC while preserving the overall performance.\\

\vspace{0.5cm}
\noindent \textbf{Funding:}
The authors gratefully acknowledge funding by the Federal Ministry for Education and Research (BMBF; grants 05M18EVA and 05M18SIA). Karl Worthmann is indebted to the German Research Foundation (DFG; grant WO\,2056/6-1).

\bibliographystyle{unsrt}
\bibliography{AT_19}

\end{document}